\documentclass[12pt,a4paper]{amsart}
\usepackage{a4wide,amsfonts,amsthm,amsmath,latexsym,amssymb,euscript,graphicx,units,mathrsfs}

\usepackage[english]{babel}
\usepackage{graphicx}
\usepackage{color}
\usepackage{amssymb}
\usepackage{amssymb}
\usepackage[T1]{fontenc}
\usepackage{latexsym}
\usepackage{xypic}
\usepackage{eufrak}
\usepackage{euscript}
\usepackage{amsfonts,amsmath}
\usepackage{verbatim}
\usepackage{fancyhdr}
\usepackage{mathrsfs}
\usepackage{units}

\usepackage{hyperref}

\newtheorem{proposition}{Proposition}[section]

\newtheorem{remark}[proposition]{Remark}

\newtheorem{theorem}[proposition]{Theorem}

\newtheorem{definition}[proposition]{Definition}

\renewcommand{\geq}{\geqslant}
\def\leq{\leqslant}

\newcommand{\N}{\mathbb{N}}

\newcommand{\R}{\mathbb{R}}

\usepackage{color}

\newcommand{\on}[1]{\operatorname{#1}}

\def\1{{\mathbf{1}}}

\def\1{{\mathbf{1}}}
\def\0.5{{\frac{1}{2}}}

\newcommand{\LL}{\operatorname{L}}
\newcommand{\Id}{\operatorname{Id}}
\newcommand{\diff}[1]{\operatorname{d}\ifthenelse{\equal{#1}{}}{\,}{#1}}

\bibliographystyle{amsplain}

\title{\bf Multivariate Gaussian approximations on Markov chaoses} 

\author{Simon Campese}
\thanks{S. Campese was partially supported by the ERC grant 277742 PASCAL}
\address[S. Campese]{Dipartimento di Matematica, Universit\`a di Roma Tor
  Vergata, Rome, Italy}
\author{Ivan Nourdin}
\author{Giovanni Peccati}
\address[I. Nourdin and G. Peccati]{Unit\'e de Recherche en Math\'ematiques,
  Universit\'e du Luxembourg, Luxembourg}
\author{Guillaume Poly}{}
\address[G. Poly]{Institut de Recherche Math\'ematique de Rennes, Universit\'e
  de Rennes 1, Rennes, France}

\email[S. Campese]{campese@mat.uniroma2.it}
\email[I. Nourdin]{ivan.nourdin@uni.lu}
\email[G. Peccati]{giovanni.peccati@gmail.com}
\email[G. Poly]{guillaume.poly@univ-rennes1.fr}

\subjclass[2000]{60F05, 60J35, 60J99}
\keywords{Markov Diffusion Generator; Fourth Moment Theorem; Multivariate Normal Approximations}

\begin{document}
\maketitle
\begin{abstract}
We prove a version of the multidimensional Fourth Moment Theorem for chaotic
random vectors, in the general context of diffusion Markov generators. In
addition to the usual componentwise convergence and unlike the
infinite-dimensional Ornstein-Uhlenbeck generator case, another moment-type
condition is required to imply joint convergence of of a given
  sequence of vectors. 
\end{abstract}

\section{Introduction}
 
The {\it Fourth Moment Theorem} (discovered by Nualart and Peccati
in~\cite{nualart_central_2005} and later extended by Nualart and
Ortiz-Latorre in~\cite{nualart_central_2008}) states that, inside a fixed Wiener 
chaos, a sequence of random variables $F_n$, $n\geq 1$, converges in
distribution towards a 
standard Gaussian random variable if and only if $\mathbb{E}[F_n^2] \to 1$ and
$\mathbb{E}[F_n^4] \to 3$. Recently, in the pathbreaking
contribution~\cite{ledoux_chaos_2012}, Ledoux approached this Fourth 
Moment Phenomenon in the more general context of diffusion Markov generators,
and was able to provide a new proof of such a result adopting a purely spectral
point of view. Later on, in~\cite{azmoodeh_fourth_2014}, Azmoodeh, Campese and
Poly generalized the concept of chaosoriginally introduced in
  \cite{ledoux_chaos_2012} and were able not only to obtain a more transparent
proof of the classical Fourth Moment Theorem, but also to exhibit many new
situations where the Fourth Moment Phenomenon occurs (e.g. the Laguerre or
Jacobi chaoses). One should notice that the collection of techniques introduced
in \cite{azmoodeh_fourth_2014} have also been successfully applied in other
contexts, e.g. for deducing moment conditions in limit theorems
(see~\cite{azmoodeh_malicet_mijoule_poly_2014}), or in the study of the
so-called {\it real Gaussian product conjecture}  (see~\cite{mnpp}).  

In this paper, we investigate a multidimensional counterpart of the
  Fourth Moment Theorem by using the aforementioned approach based on
  Markov semigroup. In the case of Wiener chaos, the multidimensional version
of the Fourth Moment Theorem is due to Peccati and Tudor
\cite{peccati_gaussian_2005}, and is given by the following
  statement. For the rest of the paper, the symbol `$\xrightarrow{d}$' indicates
  convergence in distribution. 
\begin{theorem}[See \cite{peccati_gaussian_2005}]\label{t:pt}
Let $p_1,...,p_d\geq 1$ be fixed integers and $F_n= \left(
  F_{1,n},\dots,F_{d,n} \right)$, $n\geq1$, be a sequence of vectors
such  
  that $F_{i,n}$ belongs to the $p_i$th Wiener chaos of some Gaussian field, for
  all $i=1,\ldots,d$ and all 
$n$. Furthermore, assume that 
$\lim_{n \to \infty} \on{Cov} F_n = C$, 
and denote
by 
$Z=(Z_1,\dots,Z_d)$ a centered Gaussian random vector with covariance matrix
$C$. Then, the following two assertions are equivalent, as $n\to \infty$:
\begin{enumerate}
\item[(i)] $F_n \xrightarrow{d} Z$
\item[(ii)] $F_{i,n} \xrightarrow{d} Z_i$ for all $1 \leq i \leq d$.
\end{enumerate}
\end{theorem}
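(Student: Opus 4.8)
\noindent\emph{Sketch of proof.} The implication $(i)\Rightarrow(ii)$ is immediate, since the marginals of a weakly convergent sequence converge weakly; the whole content is $(ii)\Rightarrow(i)$. Write $F_{i,n}=I_{p_i}(f_{i,n})$ for the representation of $F_{i,n}$ as a multiple Wiener--It\^o integral, with $f_{i,n}$ a symmetric element of $\mathfrak H^{\otimes p_i}$ and $\mathfrak H$ the Hilbert space attached to the underlying Gaussian field. Since $\sup_n\mathbb{E}[F_{i,n}^2]<\infty$ by the convergence of covariances and each $F_{i,n}$ lives in a \emph{fixed} chaos, hypercontractivity of the Ornstein--Uhlenbeck semigroup yields $\sup_n\mathbb{E}[|F_{i,n}|^q]<\infty$ for every $q\ge1$ and every $i$. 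Hence $\{F_n\}$ is tight and all its joint moments are bounded; as a Gaussian vector is moment-determinate, it suffices to prove that $\mathbb{E}[F_{1,n}^{a_1}\cdots F_{d,n}^{a_d}]$ converges to the corresponding moment of $Z$ for every multi-index $(a_1,\dots,a_d)$ (equivalently, via Cram\'er--Wold and one-dimensional moment-determinacy, that $\mathbb{E}[\langle\lambda,F_n\rangle^m]\to\mathbb{E}[\langle\lambda,Z\rangle^m]$ for all $\lambda\in\R^d$ and $m\ge1$).

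\medskip

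\noindent The next step converts the componentwise hypothesis into information on the kernels. By the one-dimensional Fourth Moment Theorem recalled in the Introduction, $F_{i,n}\xrightarrow{d}Z_i$ together with $\mathbb{E}[F_{i,n}^2]\to C_{ii}$ forces $\mathbb{E}[F_{i,n}^4]\to 3C_{ii}^2$, and the classical expansion of the fourth moment of a multiple integral shows this to be equivalent to $\|f_{i,n}\otimes_r f_{i,n}\|\to0$ for every $i$ and every $r=1,\dots,p_i-1$. By elementary Cauchy--Schwarz-type inequalities the same then holds for every non-scalar contraction $f_{i,n}\otimes_r f_{j,n}$ between two (possibly equal) kernels, while the scalar contractions simply reproduce the entries of $\on{Cov}F_n$, which converge; in particular $\mathbb{E}[F_{i,n}F_{j,n}]=0$ whenever $p_i\ne p_j$ by orthogonality of chaoses, so that necessarily $C_{ij}=0$ in that case, in accordance with the structure of $Z$.

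\medskip

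\noindent Finally, I would compute a generic joint moment $\mathbb{E}[F_{1,n}^{a_1}\cdots F_{d,n}^{a_d}]$ by iterating the multiplication formula for multiple integrals, which expresses it as a finite sum over contraction diagrams. The diagrams that are complete pairings contribute, in the limit, exactly the Wick--Isserlis formula for $\mathbb{E}[Z_1^{a_1}\cdots Z_d^{a_d}]$ (using $\mathbb{E}[F_{i,n}F_{j,n}]\to C_{ij}$), while every other diagram contains at least one non-scalar contraction of one of the types discussed above and hence vanishes; combined with the reduction of the first paragraph, this gives $F_n\xrightarrow{d}Z$. I expect the main obstacle to be precisely this last bookkeeping: isolating, among all diagrams produced by the iterated product formula, those that survive in the limit, and checking that the single-chaos fourth-moment conditions are strong enough to kill all the others. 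This combinatorial lemma is the technical core of the Peccati--Tudor argument, and, as the abstract anticipates, it is exactly the point where the diffusion-Markov framework will diverge from the Wiener case, since there the pure-contraction control no longer suppresses all the mixed terms --- which is why an additional moment-type condition must be imposed to recover joint convergence. A more conceptual alternative would replace the method of moments by the multivariate Malliavin--Stein method, proving that $\langle DF_{i,n},-DL^{-1}F_{j,n}\rangle\to C_{ij}$ in $L^2$ and invoking a Gaussian interpolation estimate, but that route requires the full Malliavin calculus machinery, which the spectral approach of the present paper is designed to bypass.
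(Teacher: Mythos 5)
Your sketch is essentially correct, but note that the paper does not actually prove this statement: Theorem~\ref{t:pt} is quoted from Peccati--Tudor, and the whole point of the paper is to rederive and generalize it through the spectral machinery of the generator. Your route is the classical combinatorial one: hypercontractivity on a fixed chaos gives boundedness of all moments, the Gaussian limit is moment-determinate, the one-dimensional Fourth Moment Theorem plus the identity $\|f_{i,n}\otimes_r f_{j,n}\|^2=\langle f_{i,n}\otimes_{p_i-r}f_{i,n},\,f_{j,n}\otimes_{p_j-r}f_{j,n}\rangle$ kills all non-scalar contractions, and the iterated product formula reduces every joint moment to Wick pairings plus vanishing diagram terms. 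All of these steps are sound (the diagram bookkeeping you flag is indeed the technical core, carried out in detail e.g.\ in Peccati--Taqqu's treatment of diagram formulae), so as a sketch this is a legitimate proof. The paper's implicit argument is quite different: Proposition~\ref{prop:12} bounds the gap between characteristic functions by $\sum_{i,j}\on{Var}\,\Gamma(F_{i,n},-\LL^{-1}F_{j,n})$, the proof of Theorem~\ref{thm:2} controls each such variance through the spectral inequality of Theorem~\ref{thm:3} together with the mixed fourth moments $\int F_{i,n}^2F_{j,n}^2\,d\mu$, and the Proposition following the proof shows that in the Wiener (ergodic, product-formula) setting the mixed-moment condition ${\rm (ii)}_b$ is automatically implied by componentwise convergence --- which is exactly the ``conceptual alternative'' you mention in your last sentence, transposed from $\langle DF_i,-DL^{-1}F_j\rangle$ to the carr\'e du champ. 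What each approach buys: yours stays entirely within Wiener chaos and needs the explicit multiplication formula and diagram combinatorics; the paper's avoids product formulae altogether and extends to arbitrary diffusive Markov generators, at the price of the extra hypothesis ${\rm (ii)}_b$, which your diagram analysis correctly identifies as the point where the general case diverges from the Gaussian one.
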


In other words, for sequences of random vectors living inside a fixed system of
Wiener chaoses, {\it componentwise} 
convergence implies {\it joint} convergence. Our main 
result is the following analogue of Theorem~\ref{thm:2} in the abstract Markov  
generator framework (unexplained notation and definitions -- in
  particular the notion of a {\it chaotic vector} -- will be formally
introduced in the sequel). 

\begin{theorem}
  \label{thm:2}
  Let $\LL$ be a diffusion Markov generator with discrete spectrum $0<
  \lambda_0< \lambda_1 <\cdots$, fix integers $k_1,...,k_d\geq 1$, and let 
  $F_n=(F_{1,n},\dots,F_{d,n})$, $n \geq 1$, be a sequence of chaotic
  vectors such that $F_{i,n} \in {\rm ker}(\LL + \lambda_{k_i} \Id)$, for $1 \leq i
  \leq d$ and all $n$. Furthermore, assume that $\lim_{n \to
    \infty} 
  \on{Cov} F_n = C$ and denote by $Z=(Z_1,\dots,Z_d)$ a centered
  Gaussian random vector with covariance matrix $C$ (defined on some probability
  space $(\Omega, \mathscr{F}, \mathbb{P})$). Consider the following
  asymptotic relations {\rm (i)} and {\rm (ii)}, for $n\to \infty$: 
  \begin{enumerate}
  \item[(i)] $F_n \xrightarrow{d} Z$,
  \item[(ii)] It holds that
  \begin{enumerate}
  \item[ ${\rm (ii)}_a$] for every $i=1,...,d$, $F_{i,n} \xrightarrow{d} Z_i$,
    and  
   \item[${\rm (ii)}_b$] for every $1\leq i,j \leq d$, $$\int_E^{}  F_{i,n}^2
     F_{j,n}^2 \diff{\mu} \xrightarrow{}  \mathbb{E}[Z_i^2 
        Z_j^2].$$
    \end{enumerate}
  \end{enumerate}
  Then, {\rm (ii)} implies {\rm (i)}, and the converse implication {\rm (i)}
  $\Rightarrow$ {\rm (ii)} holds whenever the sequence $\{ F_{i,n}^2 F_{j,n}^2 :
  n\geq 1\}$ is uniformly integrable for every $1\leq i,j \leq d$. 
\end{theorem}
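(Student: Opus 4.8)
The plan is to prove the two implications separately; the converse is essentially soft, while $\mathrm{(ii)}\Rightarrow\mathrm{(i)}$ is the substantive part. Throughout, since each $F_{i,n}$ belongs to the fixed eigenspace $\ker(\LL+\lambda_{k_i}\Id)$, hypercontractivity on that chaos gives $\sup_n\mathbb{E}\big[|F_{i,n}|^q\big]<\infty$ for every $q\geq1$; by H\"older's inequality, all moments appearing in the statement are finite, the families $\{F_{i,n}^2F_{j,n}^2:n\geq1\}$ are automatically uniformly integrable, and the sequence $\{F_n\}$ is tight. Granting this, $\mathrm{(i)}\Rightarrow\mathrm{(ii)}$ is immediate: if $F_n\xrightarrow{d}Z$ then $\mathrm{(ii)}_a$ follows by projecting onto the $i$-th coordinate, and $\mathrm{(ii)}_b$ follows from $F_{i,n}^2F_{j,n}^2\xrightarrow{d}Z_i^2Z_j^2$ together with uniform integrability, which upgrades convergence in law to convergence of expectations (so, under the chaotic hypothesis, the uniform-integrability condition in the statement is actually automatic).

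\emph{Reduction of $\mathrm{(ii)}\Rightarrow\mathrm{(i)}$ to a statement on the carré du champ.} Let $\Gamma$ be the carré du champ of $\LL$. For $\phi\in C_b^\infty(\real^d)$, combining $-\LL F_{i,n}=\lambda_{k_i}F_{i,n}$ with integration by parts $\mathbb{E}[\Gamma(f,g)]=-\mathbb{E}[g\LL f]$ and the diffusion (chain-rule) property of $\Gamma$ gives
\[
\mathbb{E}\big[F_{i,n}\,\phi(F_n)\big]=\sum_{j=1}^d\mathbb{E}\!\left[\partial_j\phi(F_n)\,\frac{\Gamma(F_{i,n},F_{j,n})}{\lambda_{k_i}}\right].
\]
Comparing this with the Stein identity $\mathbb{E}[Z_i\phi(Z)]=\sum_jC_{ij}\mathbb{E}[\partial_j\phi(Z)]$ characterising $N(0,C)$, applying the multivariate Stein method and using the tightness above (plus a routine regularisation when $C$ is singular), it suffices to prove that
\[
\frac{1}{\lambda_{k_i}}\,\Gamma(F_{i,n},F_{j,n})\;\xrightarrow[n\to\infty]{}\;C_{ij}\qquad\text{in }L^2(\mu),\quad\text{for all }1\leq i,j\leq d.
\]
This requirement is symmetric in $i,j$: if $k_i\neq k_j$ then $F_{i,n}$ and $F_{j,n}$ lie in orthogonal eigenspaces of the self-adjoint operator $\LL$, so $C_{ij}=\lim_n\mathbb{E}[F_{i,n}F_{j,n}]=0$.

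\emph{Diagonal and off-diagonal terms.} For $i=j$ the above convergence is exactly the one-dimensional Fourth Moment Theorem on the $k_i$-th chaos (\cite{ledoux_chaos_2012,azmoodeh_fourth_2014}): by hypercontractivity, $\mathrm{(ii)}_a$ forces $\mathbb{E}[F_{i,n}^4]\to3C_{ii}^2$, which is equivalent to $\Gamma(F_{i,n},F_{i,n})\to\lambda_{k_i}C_{ii}$ in $L^2(\mu)$. For $i\neq j$ — the crux — we have $\mathbb{E}[\Gamma(F_{i,n},F_{j,n})]=\lambda_{k_i}\mathbb{E}[F_{i,n}F_{j,n}]\to\lambda_{k_i}C_{ij}$, so it remains to prove $\var\big(\Gamma(F_{i,n},F_{j,n})\big)\to0$. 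Writing $J_m$ for the spectral projection of $\LL$ onto $\ker(\LL+\lambda_m\Id)$ and $J_0$ for the projection onto the constants, the identity $\Gamma(F_{i,n},F_{j,n})=\tfrac12\LL(F_{i,n}F_{j,n})+\tfrac{\lambda_{k_i}+\lambda_{k_j}}{2}\,F_{i,n}F_{j,n}$ yields the exact expansions
\[
\var\big(\Gamma(F_{i,n},F_{j,n})\big)=\!\!\sum_{m\,:\,0<\lambda_m\neq\lambda_{k_i}+\lambda_{k_j}}\!\!\Big(\tfrac{\lambda_{k_i}+\lambda_{k_j}-\lambda_m}{2}\Big)^{2}\big\|J_m(F_{i,n}F_{j,n})\big\|_{L^2(\mu)}^{2},\qquad \mathbb{E}[F_{i,n}^2F_{j,n}^2]=\sum_m\big\|J_m(F_{i,n}F_{j,n})\big\|_{L^2(\mu)}^{2}.
\]
The diagonal step applied to $F_{i,n}$ and $F_{j,n}$ shows that the corresponding expansions of $F_{i,n}^2$ and $F_{j,n}^2$ concentrate in $L^2(\mu)$ on the constant mode (contributing $\mathbb{E}[F_{i,n}^2]$, resp.\ $\mathbb{E}[F_{j,n}^2]$) and on the ``top'' mode $\{\lambda_m=2\lambda_{k_i}\}$. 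Feeding this in, together with $\|J_0(F_{i,n}F_{j,n})\|^2=\mathbb{E}[F_{i,n}F_{j,n}]^2\to C_{ij}^2$, the grading of eigenfunctions (so that $F_{i,n}F_{j,n}$ has spectral support in $[0,\lambda_{k_i}+\lambda_{k_j}]$), and $\mathrm{(ii)}_b$ — which says $\sum_m\|J_m(F_{i,n}F_{j,n})\|^2\to C_{ii}C_{jj}+2C_{ij}^2=\mathbb{E}[Z_i^2Z_j^2]$ — one obtains a matching lower bound $\|J_m(F_{i,n}F_{j,n})\|^2\to C_{ii}C_{jj}+C_{ij}^2$ on the top mode $\{\lambda_m=\lambda_{k_i}+\lambda_{k_j}\}$, which in turn forces every remaining ``off-target'' projection $J_m(F_{i,n}F_{j,n})$ (those with $0<\lambda_m\neq\lambda_{k_i}+\lambda_{k_j}$) to vanish in $L^2(\mu)$. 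Since these are precisely the terms carrying a nonzero coefficient in the first display, $\var\big(\Gamma(F_{i,n},F_{j,n})\big)\to0$, and $\mathrm{(i)}$ follows.

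\emph{Expected main obstacle.} The delicate point is this final step of the off-diagonal case: proving that $\mathrm{(ii)}_a$ together with the single scalar constraint $\mathrm{(ii)}_b$ really do pin down \emph{all} the off-target projections of $F_{i,n}F_{j,n}$ — equivalently, establishing a ``fourth-moment type'' inequality controlling $\var\big(\Gamma(F_{i,n},F_{j,n})\big)$ by $\mathbb{E}[F_{i,n}^2F_{j,n}^2]-\mathbb{E}[Z_i^2Z_j^2]$ up to error terms that vanish under componentwise convergence. This needs careful use of iterated integration by parts, the diffusion property, and the eigenfunction grading, and it is exactly where the abstract framework diverges from the Wiener case: there the product formula for multiple Wiener--It\^o integrals makes $\mathrm{(ii)}_b$ a \emph{consequence} of $\mathrm{(ii)}_a$ (this is the content of Theorem~\ref{t:pt}), whereas for a general Markov chaos the product $F_{i,n}F_{j,n}$ may leak mass into intermediate eigenspaces that componentwise convergence cannot detect, and $\mathrm{(ii)}_b$ is precisely the missing constraint that removes this possibility.
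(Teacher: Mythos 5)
Your overall architecture is the same as the paper's: reduce ${\rm (ii)}\Rightarrow{\rm (i)}$ to the $L^2(\mu)$ convergence $\Gamma(F_{i,n},-\LL^{-1}F_{j,n})\to C_{ij}$, settle the diagonal case by the one-dimensional Fourth Moment Theorem~\ref{thm:4}, and attack the off-diagonal case through the spectral decomposition of the product $F_{i,n}F_{j,n}$, whose support is confined to $[0,\lambda_{k_i}+\lambda_{k_j}]$ by joint chaoticity. But the crux --- which you yourself flag as ``the expected main obstacle'' --- is asserted rather than proved. The claimed ``matching lower bound'' $\|J_m(F_{i,n}F_{j,n})\|^2\to C_{ii}C_{jj}+C_{ij}^2$ on the top mode $\lambda_m=\lambda_{k_i}+\lambda_{k_j}$ does not follow from ``feeding in'' the concentration of $F_{i,n}^2$ and $F_{j,n}^2$: knowledge of the spectral decompositions of the two squares gives no direct control on the spectral decomposition of the product $F_{i,n}F_{j,n}$, and in the argument that actually works the top-mode limit comes out as a by-product of the vanishing of the intermediate modes, not the other way around. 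The missing ingredient is the derivation-property identity $\Gamma(F_i^2,F_j^2)=4F_iF_j\,\Gamma(F_i,F_j)$, which after integration by parts gives $\sum_m(\lambda_{k_i}+\lambda_{k_j}-\lambda_m)\|J_m(F_iF_j)\|^2=2\int_E F_iF_j\Gamma(F_i,F_j)\,d\mu=\lambda_{k_j}\int_E F_i^2F_j^2\,d\mu-\int_E F_i^2\Gamma(F_j,F_j)\,d\mu$: the first term on the right is controlled by ${\rm (ii)}_b$ and the second by the diagonal step, while joint chaoticity makes every coefficient $(\lambda_{k_i}+\lambda_{k_j}-\lambda_m)$ nonnegative, and this is exactly what forces the off-target projections to vanish. (The paper packages the same mechanism as the spectral inequality of Theorem~\ref{thm:3} applied to $F_iF_j-a_{ij}C_{ij}$.) Without some such identity linking the product to the squares, your argument does not close.

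A second, smaller but genuine, error: you assert that hypercontractivity holds on any Markov chaos, hence that all moments of $F_{i,n}$ are uniformly bounded and the uniform-integrability hypothesis is automatic. In the abstract diffusion-generator setting this is false --- hypercontractivity is a property of specific semigroups, not a consequence of having discrete spectrum --- and this is precisely why the statement retains uniform integrability as an explicit hypothesis for ${\rm (i)}\Rightarrow{\rm (ii)}$ (see the remark following Theorem~\ref{thm:2}). The same issue affects your diagonal step, where you deduce $\int_E F_{i,n}^4\,d\mu\to 3C_{ii}^2$ from ${\rm (ii)}_a$ via hypercontractivity; this is easily repaired by invoking ${\rm (ii)}_b$ with $i=j$, which is part of the hypothesis and is how the paper proceeds. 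Finally, the identity $\|J_0(F_{i,n}F_{j,n})\|_{L^2(\mu)}^2=\big(\int_E F_{i,n}F_{j,n}\,d\mu\big)^2$ requires ergodicity of $\LL$, which is not assumed in Theorem~\ref{thm:2}; only the inequality $\geq$ holds in general, which fortunately is the direction the argument needs.
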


\begin{remark}{\rm The additional mixed moment condition ${\rm (ii)}_b$ has no counterpart in the statement of Theorem \ref{t:pt}. In Section \ref{s-main}, we will explain in detail why such a relation is automatically 
satisfied whenever the components of the vectors $F_n$ belong to the Wiener chaos of some Gaussian field. We also observe that a sufficient condition, in order for the class 
$$
F_{i,n}^2 F_{j,n}^2,\quad n\geq 1,
$$
to be uniformly integrable for every $i,j$, is that, for some $\epsilon>0$,
\begin{equation}\label{e:lulz}
\sup_n \int_E | F_{i,n} |^{4+\epsilon} \diff{\mu} <\infty, \quad \mbox{for every}\,\, i=1,\ldots, d.
\end{equation}
Finally, if the sequence $F_n$, $n\geq 1$, lives in a fixed sum of Gaussian Wiener chaoses, then \eqref{e:lulz} is automatically implied by the relation $\lim_{n \to
    \infty} 
  \on{Cov} F_n = C$, by virtue of a standard hypercontractivity argument -- see e.g. \cite[Section 2.8.3]{nourdin_normal_2012}. General sufficient conditions in order for the semigroup associated with ${\rm L}$ to be hypercontactive can be found e.g. in \cite{b}.
}
\end{remark}

As in the one-dimensional case, it is possible in our abstract framework to provide
a proof of Theorem~\ref{thm:2} that is {\it not} based on the use of product formulae, and that exploits instead the spectral information embedded into the underlying generator ${\rm L}$.

\smallskip

The rest of this paper is organized as follows. In
Section~\ref{s-preliminaries}, we will introduce the abstract Markov generator
setting and recall the main one-dimensional findings
from~\cite{azmoodeh_fourth_2014}. In 
Section~\ref{s-main}, we will define multidimensional chaos and present the
proof of Theorem~\ref{thm:2}; we also provide a careful analysis of the additional
condition  ${\rm (ii)}_b$ appearing in our Theorem \ref{thm:2}. 

\section{Preliminaries}
\label{s-preliminaries}

In this section, we introduce the general diffusion Markov generator
setting. For a detailed treatment, we refer to the
monograph~\cite{bakry_analysis_2014}.

Throughout the rest of the paper, we fix a probability space
$(E,\mathcal{F},\mu)$ and a symmetric Markov generator $\LL$ with state space
$E$ and invariant measure $\mu$. We assume that $\LL$ has discrete spectrum $S =
\{-\lambda_k : k\geq 0\}$ and order its eigenvalues by magnitude,
i.e. $0=\lambda_0 <  \lambda_1 < \lambda_2 < \dots$. In the language of
functional analysis, $\LL$ is a self-adjoint, linear operator on $L^2(E,\mu)$
with the property that $\LL 1 = 0$. By standard spectral theory, $\LL$ is
diagonalizable and we have that
\begin{equation*}
  L^2(E,\mu) = \bigoplus_{k=0}^{\infty} {\rm ker}(\LL + \lambda_k \Id).
\end{equation*}
  We denote by $\LL^{-1}$  the pseudo-inverse of $\LL$, defined on $L^2(E,\mu)$
  by   $L^{-1}1=0$ and $L^{-1}F=-\frac{1}{\lambda}F$ for  any $F\in{\rm
    ker}(\LL+\lambda\Id)$ such that $\lambda\neq 0$. It is immediate to check
  that $\LL \LL^{-1}F= F-\int_E F\diff{\mu}$ for every $F\in L^2(E,\mu)$.  
The associated bilinear carr\'e du champ operator $\Gamma$ is defined by 
\begin{equation*}
  \Gamma(F,G) = \frac{1}{2} \left( \LL (FG) - F \LL G - G \LL F \right),
\end{equation*}
whenever the right-hand side exists. As $\LL$ is self-adjoint and $\LL 1
=0$, we immediately deduce the integration by parts formula
\begin{equation}
  \label{eq:6}
  \int_E^{} \Gamma(F,G) \diff{\mu} = - \int_E^{} F \LL G \diff{\mu} = -
  \int_E^{} G \LL F \diff{\mu}.
\end{equation}
A symmetric Markov generator is called {\it diffusive}, if it satisfies the diffusion property
\begin{equation*}
  \LL \phi(F) = \phi'(F) \LL F + \phi''(F) \Gamma(F,F) 
\end{equation*}
for all smooth test functions $\phi \colon\R\to\R$ and any $F \in L^2(E,\mu)$. Equivalently,
$\Gamma$ is a derivation i.e.
\begin{equation}
  \label{eq:5}
  \Gamma(\phi(F),G) = \phi'(F) \Gamma(F,G).
\end{equation}
Considering vectors $F=(F_1,\dots,F_d)$ and test functions $\varphi \colon \R^d
\to \R$, 
the diffusion and derivation properties yield that
\begin{equation*}
  \LL \phi(F)= \LL \phi(F_1,\dots,F_d) = \sum_{i=1}^d \frac{\partial
    \varphi}{\partial x_i} (F) \, \LL F_i + \sum_{i,j=1}^d
  \frac{\partial^2}{\partial x_i \partial x_j} (F) \, \Gamma(F_i,F_j)
\end{equation*}
and
\begin{equation*}
  \Gamma(\varphi(F),G) = \sum_{i=1}^d \frac{\partial \varphi}{\partial x_i} (F)
  \, \Gamma(F_i,G),
\end{equation*}
respectively.
In~\cite{azmoodeh_fourth_2014}, the following definition of chaos was given.

\begin{definition}[See \cite{azmoodeh_fourth_2014}]
  \label{def:1}
  Let $\LL$ be a symmetric Markov generator with discrete spectrum $S$, and let
  $F \in {\rm ker}(\LL + \lambda_p \Id)$ be an eigenfunction of $\LL$ (with
  eigenvalue 
  $\lambda_p$). We say that $F$ is \emph{chaotic}, or a \emph{chaos
    eigenfunction}, if 
  \begin{equation}\label{chaos}
    F^2\,  \in \bigoplus_{\substack{k:\,\lambda_k \leq 2\lambda_p}} {\rm ker}(\LL +
    \lambda_k \Id). 
    \end{equation}
\end{definition}

Condition (\ref{chaos}) means that, if $F$ is a chaos eigenfunction of $\LL$
with eigenvalue $\lambda_p$ (say), then the (orthogonal) decomposition of its
square along the spectrum of ${\rm L}$ only contains eigenfunctions associated
with eigenvalues that are {\it less than or equal to} twice $\lambda_p$. 
Note that this property is satisfied by {\it all} eigenfunctions of ${\rm L}$ in
many crucial instances, e.g. when ${\rm L}$ is the generator of the
Ornstein-Uhlenbeck semigroup, the Laguerre generator or the Jacobi generator
(see \cite{azmoodeh_fourth_2014}). 
Starting from this definition, and by only using the spectral information
embedded into the generator $\LL$, the authors of~\cite{azmoodeh_fourth_2014}
were able to deduce Fourth Moment Theorems for sequences of chaotic
eigenfunctions and several target distributions, drastically simplifying all
known proofs. The analogue in this framework of the classical Fourth Moment Theorem
reads as follows. 

\begin{theorem}[Abstract Fourth Moment Theorem, see~\cite{azmoodeh_fourth_2014}]
  \label{thm:4}
  Let $\LL$ be a symmetric diffusion Markov generator with discrete spectrum $S$
  and let $\{ F_n : n \geq 1\}$ be a sequence of chaotic eigenfunctions of $\LL$
  with respect to the same (fixed) eigenvalue, such that $\int_E^{}
    F_n^2 \diff{\mu}  \to 1$, $n\to \infty$. Consider the following
    three conditions, as  $n\to\infty$: 
  \begin{enumerate}
  \item[(i)] $F_n \xrightarrow{d} Z$, where $Z \sim \mathcal{N}(0,1)$,
  \item[(ii)] $\int_E^{} F_n^4 \diff{\mu}  \to 3$,
  \item[(iii)] $\on{Var}(\Gamma(F_n)) \to 0$.
  \end{enumerate}
  Then, the implications {\rm (ii) } $\Rightarrow$  {\rm (iii) } $\Rightarrow$
  {\rm (i)} hold. Furthermore, if the sequence $\{ F_n^4 : n\geq 1\}$ is
  uniformly integrable, one has that  {\rm (i) } $\Rightarrow$ {\rm (ii)}.   
\end{theorem}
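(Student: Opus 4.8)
The plan is to reduce everything to the spectral decomposition of $F_n^2$ and to exploit the sign information carried by the chaos condition \eqref{chaos}. Fix $n$, write $F=F_n$, let $\lambda_p$ be the common eigenvalue (so $\LL F=-\lambda_p F$, and $\int_E F\diff{\mu}=0$ since $F$ is orthogonal to the constants), set $m=\int_E F^2\diff{\mu}$, and let $J_k$ denote the orthogonal projection of $L^2(E,\mu)$ onto $\ker(\LL+\lambda_k\Id)$. By \eqref{chaos} we have $F^2=m+\sum_{0<\lambda_k\le 2\lambda_p}J_k(F^2)$, a \emph{finite} sum whose index set is the same for every $n$ (the eigenvalue being fixed). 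First I would record the two basic identities. Applying the diffusion property to $\phi(x)=x^2$ gives $\LL(F^2)=2F\LL F+2\Gamma(F,F)=-2\lambda_p F^2+2\Gamma(F,F)$, hence
\begin{equation*}
\Gamma(F,F)=\lambda_p m+\sum_{0<\lambda_k\le 2\lambda_p}\Big(\lambda_p-\tfrac{\lambda_k}{2}\Big)J_k(F^2),
\end{equation*}
so that $\int_E\Gamma(F,F)\diff{\mu}=\lambda_p m$ and, by orthogonality,
\begin{equation*}
\on{Var}(\Gamma(F,F))=\sum_{0<\lambda_k\le 2\lambda_p}\Big(\lambda_p-\tfrac{\lambda_k}{2}\Big)^2\,\|J_k(F^2)\|_{L^2}^2 .
\end{equation*}
The crucial point is that every coefficient $c_k:=\lambda_p-\lambda_k/2$ is \emph{nonnegative}, precisely because the chaos condition forces $\lambda_k\le 2\lambda_p$.

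Next I would produce the algebraic bridge between $\int_E F^4\diff{\mu}$ and $\on{Var}(\Gamma(F,F))$. Using the derivation property \eqref{eq:5} with $\phi(x)=x^3$ together with the integration by parts formula \eqref{eq:6},
\begin{equation*}
3\int_E F^2\,\Gamma(F,F)\diff{\mu}=\int_E\Gamma(F^3,F)\diff{\mu}=-\int_E F^3\,\LL F\diff{\mu}=\lambda_p\int_E F^4\diff{\mu}.
\end{equation*}
Expanding the left-hand side in the eigenbasis (the constant parts contribute $\lambda_p m^2$, the fluctuations contribute $\sum_k c_k\|J_k(F^2)\|_{L^2}^2$) and using Parseval's identity $\int_E F^4\diff{\mu}=m^2+\sum_k\|J_k(F^2)\|_{L^2}^2$, a short computation yields the key relation
\begin{equation*}
\int_E F^4\diff{\mu}-3m^2=\frac{3}{\lambda_p}\sum_{0<\lambda_k\le 2\lambda_p}c_k\,\|J_k(F^2)\|_{L^2}^2\ \ge\ 0 .
\end{equation*}
Comparing this with the formula for $\on{Var}(\Gamma(F,F))$ and bounding $c_k\le C^{*}:=\max_k c_k\le \lambda_p$ gives the one-sided estimate $\on{Var}(\Gamma(F,F))\le \tfrac{\lambda_p C^{*}}{3}\big(\int_E F^4\diff{\mu}-3m^2\big)$. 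Since $\int_E F_n^4\diff{\mu}\to 3$ and $m_n\to 1$, the right-hand side tends to $0$; this proves (ii)$\Rightarrow$(iii).

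For (iii)$\Rightarrow$(i) I would argue by Stein's method rather than through moments, which is essential: the top eigenspace $\lambda_k=2\lambda_p$ carries coefficient $c_k=0$, so $\on{Var}(\Gamma(F,F))$ gives \emph{no} control on $\|J_{2\lambda_p}(F^2)\|_{L^2}^2$ and hence none on $\int_E F^4\diff{\mu}$. Writing $\LL^{-1}F=-\lambda_p^{-1}F$ and combining \eqref{eq:6} with \eqref{eq:5} gives, for any smooth $\phi$ with bounded derivative,
\begin{equation*}
\int_E F\,\phi(F)\diff{\mu}=\int_E\Gamma(\phi(F),-\LL^{-1}F)\diff{\mu}=\frac{1}{\lambda_p}\int_E\phi'(F)\,\Gamma(F,F)\diff{\mu}.
\end{equation*}
Feeding the solution $\phi=\phi_h$ of the Stein equation $\phi'(x)-x\phi(x)=h(x)-\mathbb{E}[h(Z)]$ into this identity, and using $\mathbb{E}[\Gamma(F,F)]=\lambda_p m$, one gets
\begin{equation*}
\big|\mathbb{E}[h(F)]-\mathbb{E}[h(Z)]\big|=\Big|\int_E\phi_h'(F)\Big(1-\tfrac{1}{\lambda_p}\Gamma(F,F)\Big)\diff{\mu}\Big|\le \|\phi_h'\|_\infty\Big(\tfrac{1}{\lambda_p^2}\on{Var}(\Gamma(F,F))+(m-1)^2\Big)^{1/2}.
\end{equation*}
As $\on{Var}(\Gamma(F_n,F_n))\to 0$ and $m_n\to 1$, this bound tends to $0$ uniformly over a convergence-determining family of test functions $h$ (e.g. the bounded Lipschitz ones, whose Stein solutions have uniformly bounded first derivative), giving $F_n\xrightarrow{d}Z$.

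Finally, (i)$\Rightarrow$(ii) is routine: if $F_n\xrightarrow{d}Z$ then $F_n^4\xrightarrow{d}Z^4$, and uniform integrability of $\{F_n^4\}$ upgrades this to convergence of expectations, whence $\int_E F_n^4\diff{\mu}\to\mathbb{E}[Z^4]=3$. The main obstacle, and the reason the argument is organized in this way, is exactly the absence of \emph{a priori} moment bounds: the quantity $\on{Var}(\Gamma(F,F))$ controls the fluctuations of $F^2$ in every chaos \emph{except} the highest one, so the passage (iii)$\Rightarrow$(i) cannot be routed through the fourth moment and must instead be carried out directly at the level of the generator, via the integration by parts identity above.
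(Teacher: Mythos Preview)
The paper does not actually prove Theorem~\ref{thm:4}; it is quoted from \cite{azmoodeh_fourth_2014} and used as a black box. So there is no ``paper's own proof'' to compare against here. That said, your argument is correct and is essentially the proof given in \cite{azmoodeh_fourth_2014}: you exploit the spectral decomposition $F^2=m+\sum_{0<\lambda_k\le 2\lambda_p}J_k(F^2)$, observe that the chaos condition forces all the coefficients $c_k=\lambda_p-\lambda_k/2$ to be nonnegative, derive the identity $\int_E F^4\diff{\mu}-3m^2=\tfrac{3}{\lambda_p}\sum_k c_k\|J_k(F^2)\|^2$, and bound $\on{Var}(\Gamma(F,F))=\sum_k c_k^2\|J_k(F^2)\|^2$ from above by a constant times the former. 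The step (iii)$\Rightarrow$(i) via the Stein identity $\int_E F\phi(F)\diff{\mu}=\lambda_p^{-1}\int_E\phi'(F)\Gamma(F,F)\diff{\mu}$ is also the standard route in this framework (and, as you correctly point out, is forced on you because the top projection $J_{2\lambda_p}(F^2)$ is invisible to $\on{Var}(\Gamma(F,F))$). The only cosmetic remark is that your claim that the sum over $k$ is finite is not strictly needed---Parseval makes all the series converge in $L^2$ regardless---but it is harmless under the standing assumption that the eigenvalues are indexed by $\N$ in increasing order.
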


The next section is devoted to the proof of our main results.

\section{Main results}
\label{s-main}

Let us begin by noting the general fact that (under standard regularity
assumptions) the distance between the distribution of a random vector $F = 
(F_1,\dots,F_d)$ and a multivariate Gaussian law is controlled by the expression
$\sum_{i,j=1}^d \on{Var} \Gamma(F_i,-L^{-1} F_j)$. This fact can either be shown
by using the characteristic function method (see~\cite{nualart_central_2008}),
quantitatively by Stein's method (see~\cite{nourdin_multivariate_2010}) or by
means of the so-called ``smart path'' technique
(see~\cite{nourdin_steins_2010}). The proofs carry over to our setting almost
verbatim, by replacing the integration by parts formula of Malliavin 
calculus with the analogous relation~\eqref{eq:6} for the carr\'e du champ
operator $\Gamma$.

\smallskip

In order to keep our paper as self-contained as possible, instead of using the
above mentioned bounds, in the sequel we shall exploit the following estimate
involving characteristic functions: the proof is a Fourier-type variation of the
smart path method. 

\begin{proposition}
\label{prop:12}
Let $F =(F_1,\dots,F_d)$ be such that $F_j\in L^2(E,\mu)$ and $\int_E^{} F_j
\diff{\mu} = 0$ for $1 \leq j \leq d$. To avoid technicalities, assume
furthermore the existence of a finite $N\geq 1$ such that $F_j\in
\bigoplus_{k=0}^N 
{\rm ker}(\LL + \lambda_k 
      \Id)$ for all $1\leq j\leq d$.
Let $\gamma_d$ be the law of a $d$-dimensional
centered Gaussian random variable with covariance matrix $C$. Then,
for any $t\in\R^d$  it holds that
\begin{equation*}
  \left|
    \int_E^{} e^{i\langle t,F\rangle_2} \diff{\mu} - \int_{\R^d}^{} e^{i\langle
      t,x\rangle_2}  
    \diff\!{\gamma_d(x)} 
  \right|
  \leq
  \|t\|_2^2
  \sqrt{
    \sum_{i,j=1}^d \int_E^{} \left( C_{ij} - \Gamma(F_i,-\LL^{-1} F_j) \right)^2
    \diff{\mu} 
  }.
\end{equation*}
\end{proposition}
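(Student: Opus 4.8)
The plan is to run a Fourier-type smart path interpolation between the characteristic function of $F$ and that of $\gamma_d$. Fix $t \in \R^d$ and, on a product space carrying an independent copy $Z \sim \gamma_d$, define for $s \in [0,1]$ the interpolation
$$
\varphi(s) = \mathbb{E}\left[ e^{i\langle t, \sqrt{s}\, F + \sqrt{1-s}\, Z\rangle_2} \right],
$$
where the expectation is over $\mu$ in the $F$-variable and over $\gamma_d$ in the $Z$-variable. Then $\varphi(1) = \int_E e^{i\langle t,F\rangle_2}\diff{\mu}$ and $\varphi(0) = \int_{\R^d} e^{i\langle t,x\rangle_2}\diff{\gamma_d(x)}$, so the left-hand side of the claimed inequality is $|\varphi(1) - \varphi(0)| \leq \int_0^1 |\varphi'(s)|\,\diff{s}$, and everything reduces to bounding $\varphi'(s)$.

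First I would differentiate under the expectation. The derivative produces two groups of terms: one of the form $\frac{i}{2\sqrt s}\sum_j t_j\, \mathbb{E}[F_j\, e^{i\langle\cdots\rangle}]$ coming from the $F$-part, and one of the form $-\frac{i}{2\sqrt{1-s}}\sum_j t_j\, \mathbb{E}[Z_j\, e^{i\langle\cdots\rangle}]$ from the $Z$-part. The $Z$-part I would handle with the Gaussian integration by parts identity $\mathbb{E}[Z_j g(Z)] = \sum_k C_{jk}\,\mathbb{E}[\partial_k g(Z)]$, which after the chain rule turns it into $-\frac{1}{2}\sum_{j,k} t_j t_k C_{jk}\,\varphi(s)$-type contributions. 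For the $F$-part, the key move is to write $F_j = \LL\LL^{-1}F_j$ (legitimate since $\int_E F_j\diff{\mu}=0$) and apply the integration by parts formula \eqref{eq:6} with $G = -\LL^{-1}F_j$; combined with the diffusion/derivation property \eqref{eq:5} applied to $\phi(x) = e^{i\langle t, \sqrt s\, x + \sqrt{1-s}\, z\rangle_2}$, the term $\int_E \Gamma(F_i, -\LL^{-1}F_j)\,(\text{exponential})\,\diff{\mu}$ appears, again with a prefactor that cancels the $\sqrt s$. Collecting everything, $\varphi'(s)$ equals $-\frac{1}{2}\sum_{i,j} t_i t_j\, \mathbb{E}\left[ \left(C_{ij} - \Gamma(F_i, -\LL^{-1}F_j)\right) e^{i\langle t, \sqrt s F + \sqrt{1-s} Z\rangle_2}\right]$; the finite-spectrum assumption $F_j \in \bigoplus_{k=0}^N \on{ker}(\LL+\lambda_k\Id)$ is exactly what guarantees that all these $\Gamma$ and $\LL^{-1}$ manipulations are valid with no integrability or domain issues, since $\Gamma(F_i,-\LL^{-1}F_j)$ then lies in a finite sum of eigenspaces and is in every $L^p$.

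Finally I would bound: since $|e^{i\langle\cdots\rangle}| = 1$, taking absolute value inside gives $|\varphi'(s)| \leq \frac{1}{2}\sum_{i,j}|t_i||t_j|\int_E |C_{ij} - \Gamma(F_i,-\LL^{-1}F_j)|\,\diff{\mu}$, and then Cauchy--Schwarz (first the $\mu$-integral, then the sum over $i,j$ against $\|t\|_2^2 = \sum_i\sum_j |t_i||t_j|$ suitably grouped) converts this into $\|t\|_2^2$ times the square root expression on the right-hand side. Integrating over $s\in[0,1]$ costs nothing since the bound is $s$-independent. The main obstacle is purely bookkeeping: keeping track of the $\frac{1}{2\sqrt s}$ and $\frac{1}{2\sqrt{1-s}}$ prefactors and verifying that the singular terms genuinely cancel after the two integration by parts (Gaussian on the $Z$-side, $\Gamma$-based on the $F$-side) — this cancellation is the whole point of choosing the square-root interpolation, and it is where a sign or factor error would be fatal. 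One minor technical point to address is the differentiation under the integral sign near $s=0$ and $s=1$; this is justified by the finite-spectrum hypothesis, which makes all the relevant random variables bounded in $L^2$ uniformly and the integrand smooth in $s$ on the open interval, with the resulting bound extending to the closed interval by continuity.
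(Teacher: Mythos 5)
Your proof is correct and is essentially the paper's own argument: the paper's interpolant $\Psi(\theta)=e^{\frac{\theta^2}{2}t^*Ct}\int_E e^{i\theta\langle t,F\rangle_2}\,d\mu$ coincides with your $\varphi(s)$ after the change of variables $\theta=\sqrt{s}$ (up to the constant factor $e^{-\frac12 t^*Ct}$, the auxiliary Gaussian $Z$ being integrated out), and both proofs hinge on the same steps --- writing $F_j=\LL\LL^{-1}F_j$, integrating by parts via \eqref{eq:6}, applying the derivation property \eqref{eq:5}, and finishing with a double Cauchy--Schwarz. The only blemish is a harmless global sign in your displayed formula for $\varphi'(s)$ (the Gaussian integration by parts yields $+\tfrac12\sum_{i,j}t_it_jC_{ij}$, so that $\varphi'(s)=+\tfrac12\sum_{i,j}t_it_j\,\mathbb{E}\bigl[(C_{ij}-\Gamma(F_i,-\LL^{-1}F_j))e^{i\langle t,\sqrt{s}F+\sqrt{1-s}Z\rangle_2}\bigr]$), which disappears upon taking absolute values.
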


\begin{proof}
  Fix $t\in \R^d$ and define $\Psi(\theta) = 
e^{\frac{\theta^2}2 t^*Ct}  
  \int_E^{} e^{i\theta\langle t,F\rangle_2} \diff{\mu}$ for $\theta \in
  [0,1]$. It is straightforward to verify that $\Psi$ is differentiable on
  $(0,1)$, so that 
  \begin{equation*}
   \int_E^{} e^{i\langle t,F\rangle_2} \diff{\mu} - \int_{\R^d}^{} e^{i\langle
     t,x\rangle_2}  
    \diff\!{\gamma_d(x)} 
    =e^{-\frac{1}2 t^*Ct}  \big(
    \Psi(1) - \Psi(0) \big)=   e^{-\frac{1}2 t^*Ct}  \int_0^1  \Psi'(\theta)
    \diff{\theta}, 
  \end{equation*}
  and that the derivative of $\Psi$ is given by
  \begin{equation}
    \label{eq:106}
    \Psi'(\theta) = 
e^{\frac{\theta^2}2 t^*Ct}    
\left(\theta\,t^*Ct
    \int_E^{} e^{i\theta\langle t,F\rangle_2} \diff{\mu}
+i   \int_E^{}  \langle t,F\rangle_2\,\, e^{i\theta\langle t,F\rangle_2}
\diff{\mu}  
    \right).
  \end{equation}
Using both the integration by parts formula~\eqref{eq:6} and the diffusion
property (\ref{eq:5})  for 
  $\Gamma$ yields that  
  \begin{equation}
    \label{eq:103}
     \Psi'(\theta) = \theta
e^{\frac{\theta^2}2 t^*Ct}    
\sum_{i,j=1}^d t_it_j
    \int_E^{} ( C_{ij} - \Gamma(F_i,-\LL^{-1} F_j))\,e^{i\theta\langle
      t,F\rangle_2} \diff{\mu}. 
  \end{equation}
 The conclusion follows from an application of the Cauchy-Schwarz inequality.
\end{proof}

\begin{definition}[Jointly chaotic eigenfunction]
  \label{def:2}
  \hfill
  \begin{enumerate}
  \item[1.] Let $F_i \in {\rm ker}(\LL + \lambda_{k_i}\Id)$ and $F_j \in {\rm ker}(\LL +
    \lambda_{k_j}\Id)$ be two eigenfunctions of $\LL$. We say that $F_i$ and $F_j$
  are \emph{jointly chaotic}, if
  \begin{equation}
    F_i F_j \in \bigoplus_{r:\,\lambda_r \leq \lambda_{k_i}+\lambda_{k_j}} {\rm ker}(L +
      \lambda_r \Id).
  \end{equation}
\item[2.]  Let $F=(F_1,\dots,F_d)$ be a vector of eigenfunctions of $\LL$, such that
  $F_i \in \ker(\LL + \lambda_{k_i} \Id)$ for $1 \leq i \leq
  d$. Whenever any two of its components (possibly the same) are
      jointly chaotic, we say that $F$ 
  is \emph{chaotic}. In
  particular, this implies that each component of a chaotic vector is chaotic in
  the sense of Definition~\ref{def:1}. 
  \end{enumerate}
\end{definition}

We observe that, in many important examples, all vectors of eigenfunctions are
chaotic. In particular, this is the case for Wiener, Laguerre and Jacobi
chaos (see~\cite{azmoodeh_fourth_2014}). A crucial ingredient for the proof of
our main result is the following result, whose short proof can be found
in~\cite{azmoodeh_fourth_2014}. 

\begin{theorem}[{\cite[Thm. 2.1]{azmoodeh_fourth_2014}}]
  \label{thm:3}
  Fix an eigenvalue $-\lambda = -\lambda_n \in S$ and assume that 
  $F \in \bigoplus_{k=0}^n \on{ker} \left( \LL + \lambda_k \Id \right)$. Then it
  holds that, for any $\eta \geq \lambda_n$,
  \begin{equation*}
  \int_E^{} F \left( \LL + \eta \Id \right)^2 (F) \diff{\mu}
    \leq
    \eta
    \int_E^{} F \left( \LL + \eta \Id \right) (F) \diff{\mu}.
  \end{equation*}
\end{theorem}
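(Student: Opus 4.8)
The plan is to prove Theorem~\ref{thm:3}, which is purely spectral. First I would decompose $F$ along the eigenspaces of $\LL$: writing $F = \sum_{k=0}^{n} F_k$ with $F_k \in \on{ker}(\LL + \lambda_k \Id)$, the summands are mutually orthogonal in $L^2(E,\mu)$, and $\LL F_k = -\lambda_k F_k$. Consequently $(\LL + \eta\Id) F = \sum_{k=0}^n (\eta - \lambda_k) F_k$ and $(\LL + \eta\Id)^2 F = \sum_{k=0}^n (\eta - \lambda_k)^2 F_k$. Using orthogonality, this turns both sides of the claimed inequality into sums of squared norms:
\begin{equation*}
  \int_E^{} F (\LL + \eta\Id)^2(F) \diff{\mu} = \sum_{k=0}^{n} (\eta - \lambda_k)^2 \int_E^{} F_k^2 \diff{\mu},
  \qquad
  \int_E^{} F (\LL + \eta\Id)(F) \diff{\mu} = \sum_{k=0}^{n} (\eta - \lambda_k) \int_E^{} F_k^2 \diff{\mu}.
\end{equation*}

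Next I would reduce the inequality to a termwise comparison. Since $\eta \geq \lambda_n \geq \lambda_k$ for every $k \leq n$, we have $\eta - \lambda_k \geq 0$, so each coefficient $(\eta - \lambda_k)$ is nonnegative; moreover $\eta - \lambda_k \leq \eta$. Hence for each $k$,
\begin{equation*}
  (\eta - \lambda_k)^2 \int_E^{} F_k^2 \diff{\mu} = (\eta - \lambda_k)\,(\eta - \lambda_k) \int_E^{} F_k^2 \diff{\mu} \leq \eta\,(\eta - \lambda_k) \int_E^{} F_k^2 \diff{\mu}.
\end{equation*}
Summing over $k = 0, \dots, n$ yields exactly
\begin{equation*}
  \int_E^{} F (\LL + \eta\Id)^2(F) \diff{\mu} \leq \eta \int_E^{} F (\LL + \eta\Id)(F) \diff{\mu},
\end{equation*}
which is the assertion.

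There is essentially no obstacle here: the only points requiring a word of care are the convergence and orthogonality of the eigenspace decomposition (guaranteed by the standing assumption that $F$ lies in the \emph{finite} direct sum $\bigoplus_{k=0}^n \on{ker}(\LL + \lambda_k\Id)$, so all sums are finite and no spectral-integral subtleties arise), and the sign of the factor $\eta - \lambda_k$, which is where the hypothesis $\eta \geq \lambda_n$ is used and is essential — without it the termwise bound fails for the top eigenvalue. The nonnegativity of each $\int_E F_k^2 \diff{\mu}$ then makes the summation step immediate.
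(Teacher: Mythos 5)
Your proof is correct and is precisely the ``short proof'' the paper alludes to by deferring to \cite[Thm.~2.1]{azmoodeh_fourth_2014}: decompose $F$ along the finitely many eigenspaces, diagonalize both quadratic forms, and compare coefficients using $0\leq\eta-\lambda_k\leq\eta$. No gaps; the finiteness of the sum and the sign condition $\eta\geq\lambda_n$ are exactly the points that need to be (and are) checked.
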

We are now ready to prove our main Theorem \ref{thm:2}.

%

\begin{proof}[Proof of Theorem \ref{thm:2}] Since the remaining parts of the
  statement are straightforward, we will only prove the implication (ii)
  $\Rightarrow$ (i). According to Proposition \ref{prop:12}, it 
  suffices to show that, if (ii) is satisfied then, for $1 \leq i,j \leq d$, 
  \begin{equation*}
    \int_E^{} \left( \Gamma \left( F_{i,n}, -\LL^{-1} F_{j,n} \right) - C_{ij}
    \right)^2 \diff{\mu} \to 0,
  \end{equation*}
  as $n\to\infty$. For $i=j$, this follows from the one-dimensional Fourth Moment
  Theorem~\ref{thm:4}. Let us thus assume that $i \neq j$. For the sake of readability, we
  temporarily suppress the index $n$ from $F_{i,n}$ and $F_{j,n}$.
  It holds that $\Gamma(F_i, -\LL^{-1} F_j) - C_{ij} =
  \frac{1}{\lambda_{k_{j}}}  \Gamma(F_i,F_j) - C_{ij} $ and, by 
  definition of $\Gamma$, we can write
  \begin{equation*}
    \Gamma(F_i, F_j) - \lambda_{k_j} C_{ij} = \frac{1}{2} \left( \LL +
      (\lambda_{k_i}+ \lambda_{k_j}) \Id \right) \left( F_iF_j - a_{ij}
       C_{ij}\right),
   \end{equation*}
   where $a_{ij} = \frac{2\lambda_{k_j}}{\lambda_{k_i}+\lambda_{k_j}}$.
   Inserting the definition of the carr\'e du champ operator, we get that
  \begin{align}
     \int_E^{} \big( \Gamma(F_i,&-\LL^{-1} F_j) - C_{ij} \big)^2
    \diff{\mu}  \notag
    \\ &= \notag
         \frac{1}{\lambda_{k_j}^2}
    \int_E^{} \left( \Gamma(F_i, F_j) - \lambda_{k_j} C_{ij} \right)^2 \diff{\mu}
    \\ &= \notag
         \frac{1}{4\lambda_{k_j}^2}
         \int_E^{}
         \left( 
      \left( \LL +
      (\lambda_{k_i}+ \lambda_{k_j}) \Id \right) \left( F_iF_j - a_{ij} C_{ij}
      \right) \right)^2 \diff{\mu}
         \\ &= \notag
         \frac{1}{4\lambda_{k_j}^2}
         \int_E^{}
         \left(
         F_iF_j - a_{ij} C_{ij} \right)
      \left( \LL +
      (\lambda_{k_i}+ \lambda_{k_j}) \Id \right)^2 \left( F_iF_j - a_{ij} C_{ij}
              \right)  \diff{\mu}
  \end{align}
  Therefore, by Theorem~\ref{thm:3},
  \begin{align}
  \int_E^{} \big( \Gamma(F_i,&-\LL^{-1} F_j) - C_{ij} \big)^2
    \diff{\mu}  \notag    
   \\ &\leq \notag
      \frac{ \lambda_{k_i} +\lambda_{k_j}}{4\lambda_{k_j}^2}
         \int_E^{}
         \left( F_iF_j - a_{ij} C_{ij}\right) 
      \left( \LL +
      (\lambda_{k_i}+ \lambda_{k_j}) \Id \right) \left( F_iF_j - a_{ij}
         C_{ij}\right) \diff{\mu}
    \\ &= \notag
         \frac{1}{a_{ij} \lambda_{k_j}}
         \int_E^{}
           \left( F_iF_j - a_{ij} C_{ij}\right)
           \left( \Gamma(F_i,F_j) - \lambda_{k_j} C_{ij} \right)
         \diff{\mu}
    \\ &=
         \frac{1}{a_{ij} \lambda_{k_j}}
         \left(
           \int_E^{} F_i F_j \Gamma(F_i,F_j) \diff{\mu} 
           - a_{ij} C_{ij} \lambda_{k_j} \int_E^{} F_iF_j
         \diff{\mu}\right.  \label{eq:2} 
    \\
      &\left. 
          \quad \quad \quad\quad \quad \quad\quad \quad \quad \quad  \quad \quad \quad \quad \quad \quad- \lambda_{k_j} C_{ij} \int_E^{} \left( F_iF_j -  a_{ij} C_{ij}\right)  \diff{\mu}
         \right).\notag         
  \end{align}
  Now, by the diffusion property~\eqref{eq:5} and the integration by parts
  formula~\eqref{eq:6}, we  have
  \begin{eqnarray*}
    \int_E^{} F_i F_j \Gamma(F_i,F_j) \diff{\mu}
    &=&
      \frac{1}{4}
      \int_E^{} \Gamma(F_i^2,F_j^2) \diff{\mu}=
         - \frac{1}{4}
         \int_E^{} F_i^2 \LL (F_j^2) \diff{\mu}
    \\ &=&
         - \frac{1}{2}
         \int_E^{} F_i^2 \left( F_j \LL F_j + \Gamma(F_j,F_j) \right) \diff{\mu}
    \\ &=&
         \frac{\lambda_{k_j}}{2}
         \int_E^{} F_i^2 F_j^2 \diff{\mu}
         -
         \frac{1}{2}
         \int_E^{} F_i^2 \Gamma(F_j,F_j) \diff{\mu}
    \\ &=&
         \frac{\lambda_{k_j}}{2}
         \int_E^{} F_i^2 \left( F_j^2 - C_{jj} \right)
         \diff{\mu}
         -
         \frac{1}{2} \int_E^{} F_i^2 \left(  \Gamma(F_j,F_j) - \lambda_{k_j}
         C_{jj} \right) \diff{\mu}
    \\ &\leq&
         \frac{\lambda_{k_j}}{2}
          \int_E^{} F_i^2 \left( F_j^2 - C_{jj} \right)
         \diff{\mu}
         +
         \frac{1}{2} \sqrt{m_4(F_i)}\,\sqrt{ {\rm Var}(\Gamma(F_j,F_j))}
            \\ &=&
         \frac{\lambda_{k_j}}{2} \bigg(
          \int_E^{} F_i^2 F_j^2          \diff{\mu}- C_{ii}C_{jj}
          \bigg)
         +
         \frac{1}{2} \sqrt{m_4(F_i)}\,\sqrt{ {\rm Var}(\Gamma(F_j,F_j))}.
  \end{eqnarray*}
  Plugging such an estimate into~\eqref{eq:2} and reintroducing the index $n$,  we can thus
  write
  \begin{multline*}
    \int_E^{} \left( \Gamma(F_{i,n},-\LL^{-1} F_{j,n}) - C_{ij} \right)^2
    \diff{\mu}
    \\ \leq
    \frac{1}{a_{ij} \lambda_{k_j}}
    \left(
      \frac{1}{2} \sqrt{ m_4(F_{i,n})}\,\sqrt{  {\rm Var}(\Gamma(F_{j,n},F_{j,n})) }+ R_{i,j}(n)
      \right),
    \end{multline*}
  where
\[
          R_{ij}(n)
    = \lambda_{k_j} \left( \frac{1}{2} \int_E^{} F_{i,n}^2  F_{j,n}^2  \diff{\mu}  - \frac12  C_{ii,n}C_{jj,n}
   - a_{ij} C_{ij,n}^2  \right)
       -
      \frac{C_{ij,n}^2(1-a_{ij})}{\lambda_{k_j}}.
\]
By virtue of Theorem \ref{thm:4}, we are now left to show that $R_{ij}(n)\to 0$, as $n\to\infty$. If $\lambda_{k_i} = \lambda_{k_j}$, we have that $a_{ij}=1$ and thus
\[
    R_{ij}(n) = \frac{\lambda_{k_j}}{2} \left( \int_E^{} F_{i,n}^2  F_{j,n}^2  \diff{\mu}  - C_{ii,n}C_{jj,n}
   - 2 C_{ij,n}^2  \right)
\]
  As $\mathbb{E}[Z_i^2 Z_j^2] =  \mathbb{E}[Z_i^2] \mathbb{E}[Z_j^2] + 2 \mathbb{E}[Z_{ij}]^2$, we see
  that due to assumption ${\rm (ii)}_b$, $R_{ij}(n) \to 0$ as $n \to \infty$.
  In the case where $\lambda_{k_i} \neq \lambda_{k_j}$, it necessarily holds
  that $C_{ij,n} = 0$ (by orthogonality of eigenfunctions of different orders) and thus
  \begin{equation*}
    R_{ij}(n) = \frac{\lambda_{k_j}}{2} \left( \int_E^{} F_{i,n}^2 F_{j,n}^2
      \diff{\mu}
    - C_{ii,n}C_{jj,n}
 \right),
  \end{equation*}
  which, again due to assumption ${\rm (ii)}_b$, vanishes in the limit as well.

\end{proof}

\begin{remark}
{\rm
  In the framework of the classical Theorem \ref{t:pt}, where $\LL$ is the
  infinite-dimensional Ornstein-Uhlenbeck generator and its eigenfunctions are
  multiple Wiener-It\^o integrals, condition ${\rm (ii)}$ actually reduces to
  ${\rm (ii)}_a$, since ${\rm (ii)}_a\Rightarrow {\rm (ii)}_b$. In 
  other words, componentwise Gaussian convergence in distribution always yields
  joint Gaussian convergence in this case. From our abstract point of view, we
  can explain (and generalize) this phenomenon by means of the next result.
  }
\end{remark}

\begin{proposition}
  In the setting and with the notation of Theorem~\ref{thm:2}, assume in
  addition that 
  \begin{enumerate}
  \item[1.] $\LL$ is ergodic, i.e. its kernel only consists of constant functions,
  \item[2.] For $1 \leq i,j \leq d$ such that $\lambda_{k_i}=\lambda_{k_j}$, it
    holds that 
    \begin{equation}
      \label{eq:3}
      \int_E^{} \pi_{2\lambda_{k_i}}(F_{i,n}^2) \pi_{2\lambda_{k_i}}(F_{j,n}^2)
      \diff{\mu} -  2 \left( \int_E^{} F_{i,n} F_{j,n} \diff{\mu}
      \right)^2 \xrightarrow{} 0,
    \end{equation}
    where $\pi_{\lambda}$ denotes the orthogonal projection onto ${\rm ker}(\LL +
      \lambda \Id)$.
  \end{enumerate}
  Then, the following two assertions are equivalent.
  \begin{enumerate}
  \item[(i)] $F_n \xrightarrow{d} Z$
  \item[(ii)] For $1 \leq i \leq d$ it holds that $F_{i,n} \xrightarrow{d} Z_i$.
  \end{enumerate}
\end{proposition}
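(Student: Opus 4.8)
The plan is to show that the additional hypotheses~1.\ and~2.\ force condition~${\rm (ii)}_b$ of Theorem~\ref{thm:2} to follow automatically from~${\rm (ii)}_a$, so that the equivalence ${\rm (i)}\Leftrightarrow{\rm (ii)}_a$ is a direct consequence of Theorem~\ref{thm:2}. The implication ${\rm (i)}\Rightarrow{\rm (ii)}$ is trivial (continuous mapping), hence the real content is ${\rm (ii)}_a\Rightarrow{\rm (ii)}_b$ under~1.\ and~2. So I would fix $i,j$ and analyse the quantity $\int_E F_{i,n}^2 F_{j,n}^2\diff\mu$ by decomposing each square $F_{i,n}^2$ and $F_{j,n}^2$ along the spectrum of $\LL$.

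First I would use the chaotic-vector hypothesis: since $F_{i,n}\in{\rm ker}(\LL+\lambda_{k_i}\Id)$ is (jointly) chaotic, its square $F_{i,n}^2$ lives in $\bigoplus_{r:\,\lambda_r\leq 2\lambda_{k_i}}{\rm ker}(\LL+\lambda_r\Id)$, and similarly for $F_{j,n}^2$. Writing $F_{i,n}^2=\sum_{r:\,\lambda_r\le 2\lambda_{k_i}}\pi_{\lambda_r}(F_{i,n}^2)$ and expanding the product via orthogonality of eigenspaces, I get
\begin{equation*}
\int_E F_{i,n}^2 F_{j,n}^2\diff\mu=\sum_{r}\int_E \pi_{\lambda_r}(F_{i,n}^2)\,\pi_{\lambda_r}(F_{j,n}^2)\diff\mu,
\end{equation*}
where the sum runs over the eigenvalues $\lambda_r$ common to both spectral supports, i.e.\ $\lambda_r\le\min(2\lambda_{k_i},2\lambda_{k_j})$. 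The zero-frequency term is $\pi_0(F_{i,n}^2)\pi_0(F_{j,n}^2)=\big(\int_E F_{i,n}^2\diff\mu\big)\big(\int_E F_{j,n}^2\diff\mu\big)=C_{ii,n}C_{jj,n}$ by ergodicity (hypothesis~1.), which converges to $C_{ii}C_{jj}=\mathbb E[Z_i^2]\mathbb E[Z_j^2]$. The target $\mathbb E[Z_i^2 Z_j^2]=C_{ii}C_{jj}+2C_{ij}^2$, so I must show that the sum of all \emph{nonzero}-frequency cross terms converges to $2C_{ij}^2=2\big(\lim_n\int_E F_{i,n}F_{j,n}\diff\mu\big)^2$.

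The heart of the argument is then to isolate, among the nonzero frequencies, the top frequency and show all the others vanish. If $\lambda_{k_i}\ne\lambda_{k_j}$, then $C_{ij,n}=0$ by orthogonality, and I would invoke the one-dimensional Fourth Moment Theorem~\ref{thm:4}: hypothesis~${\rm (ii)}_a$ gives $F_{i,n}\xrightarrow{d}Z_i$ with $Z_i$ Gaussian, which (via uniform integrability arising from the fixed-chaos hypercontractivity-type bound, or more intrinsically from Theorem~\ref{thm:4}(i)$\Rightarrow$(iii) and back) forces $\on{Var}(\Gamma(F_{i,n},F_{i,n}))\to 0$; a standard computation of the kind performed in the proof of Theorem~\ref{thm:2} shows that $\on{Var}(\Gamma(F_{i,n}))\to0$ implies $\int_E\pi_{\lambda_r}(F_{i,n}^2)^2\diff\mu\to0$ for every \emph{nonzero} $\lambda_r<2\lambda_{k_i}$ and also pins down the top component $\pi_{2\lambda_{k_i}}(F_{i,n}^2)$. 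Consequently every cross term $\int_E\pi_{\lambda_r}(F_{i,n}^2)\pi_{\lambda_r}(F_{j,n}^2)\diff\mu$ with $0<\lambda_r<2\lambda_{k_i}$ is killed by Cauchy--Schwarz, and if $2\lambda_{k_i}\ne2\lambda_{k_j}$ there is no surviving top term either, so $\int_E F_{i,n}^2F_{j,n}^2\diff\mu\to C_{ii}C_{jj}=\mathbb E[Z_i^2 Z_j^2]$ (since here $C_{ij}=0$). If instead $\lambda_{k_i}=\lambda_{k_j}=:\lambda$, the only possibly-surviving nonzero cross term is the top one, at frequency $2\lambda$, namely $\int_E\pi_{2\lambda}(F_{i,n}^2)\pi_{2\lambda}(F_{j,n}^2)\diff\mu$; all intermediate-frequency cross terms again vanish by Cauchy--Schwarz together with $\on{Var}(\Gamma(F_{i,n}))\to0$. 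At this point hypothesis~2.\ (equation~\eqref{eq:3}) is exactly the statement that $\int_E\pi_{2\lambda}(F_{i,n}^2)\pi_{2\lambda}(F_{j,n}^2)\diff\mu\to 2\big(\lim_n\int_E F_{i,n}F_{j,n}\diff\mu\big)^2=2C_{ij}^2$, and combining everything yields $\int_E F_{i,n}^2F_{j,n}^2\diff\mu\to C_{ii}C_{jj}+2C_{ij}^2=\mathbb E[Z_i^2 Z_j^2]$, which is~${\rm (ii)}_b$. Then Theorem~\ref{thm:2} closes the equivalence.

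The main obstacle I anticipate is the careful bookkeeping needed to justify that $\on{Var}(\Gamma(F_{i,n},F_{i,n}))\to 0$ controls \emph{all} the intermediate spectral components $\pi_{\lambda_r}(F_{i,n}^2)$ for $0<\lambda_r<2\lambda_{k_i}$, and in particular that it does not interfere with the top component. This is implicit in~\cite{azmoodeh_fourth_2014} (it is the mechanism behind the equivalence of conditions in Theorem~\ref{thm:4}), but in the multivariate setting one must be a little careful that the Cauchy--Schwarz estimates on the cross terms only require control of one of the two factors, and that the ``diagonal'' input $\on{Var}(\Gamma(F_{i,n}))\to 0$ is genuinely available under ${\rm (ii)}_a$ — which is where one uses that the components live in a fixed (finite) sum of eigenspaces, so that $F_{i,n}\xrightarrow{d}Z_i$ upgrades to $L^2$-type control via hypercontractivity, exactly as in the remark following Theorem~\ref{thm:2}. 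Once this is in place, the rest is the orthogonal expansion of the product and a direct appeal to hypotheses~1.\ and~2.
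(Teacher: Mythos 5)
Your proposal is correct and follows essentially the same route as the paper: reduce to showing ${\rm (ii)}_a\Rightarrow{\rm (ii)}_b$, expand $\int_E F_{i,n}^2F_{j,n}^2\diff{\mu}$ along the spectral decomposition of the squares, handle the zero-frequency term by ergodicity, kill the intermediate frequencies via the one-dimensional Fourth Moment Theorem (through $\on{Var}(\Gamma(F_{i,n}))\to0$, i.e.\ $\int_E\pi_{\lambda_r}(F_{i,n}^2)^2\diff{\mu}\to0$ for $0<\lambda_r<2\lambda_{k_i}$) plus Cauchy--Schwarz, and identify the top-frequency term with $2C_{ij}^2$ via hypothesis~2. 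The integrability caveat you raise about upgrading $F_{i,n}\xrightarrow{d}Z_i$ to $\on{Var}(\Gamma(F_{i,n}))\to0$ is present (and left implicit) in the paper's own argument as well.
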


\begin{proof}

  In view of Theorem \ref{thm:2}, 
  we have only to show that condition ${\rm (ii)}_a$ therein implies that
  \begin{equation*}
    \int_E^{} F_{i,n}^2 F_{j,n}^2 \diff{\mu} \to \mathbb{E}[Z_i^2 Z_j^2]
  \end{equation*}
  or equivalently (as
  $\mathbb{E}[Z_i^2Z_j^2]=\mathbb{E}[Z_i^2]\mathbb{E}[Z_j^2]+2 
  \mathbb{E}[Z_iZ_j]^2$) that 
  \begin{equation*}
        \int_E^{} F_{i,n}^2 F_{j,n}^2 \diff{\mu} - \int_E^{} F_{i,n}^2
        \diff{\mu} \int_E^{} F_{j,n}^2 \diff{\mu} - 2 \left( \int_E^{} F_{i,n}
          F_{j,n} \diff{\mu} \right)^2 \to 0
  \end{equation*}
  for $1 \leq i,j \leq d$.
  To do so, we first note that, by the definition of $\Gamma$ and the chaotic
  property of $F_{i,n}$, it holds that
  \begin{equation*}
    \Gamma(F_{i,n},F_{i,n})
    =
    \frac12\left( \LL + 2\lambda_{k_i} \Id \right) (F_{i,n}^2)
    =
    \frac12\sum_{\substack{r \in \N \\ \lambda_r < 2\lambda_{k_i}}}^{}
    (2\lambda_{k_i} - \lambda_r) \pi_{\lambda_r}(F_{i,n}^2).
  \end{equation*}
  Therefore, by orthogonality of the projections corresponding to different eigenvalues and by the one-dimensional Fourth Moment Theorem~\ref{thm:4}, $F_{i,n}
  \xrightarrow{d} Z_i$ implies that 
  \begin{equation}
    \label{eq:4}
    \int_E^{} \pi_{\lambda_r}(F_{i,n}^{2})^2 \diff{\mu} \to  0, \qquad\mbox{for all $r$ such that }
    0 < \lambda_r < 2\lambda_{k_i}.
  \end{equation}
  We exploit this fact by writing
  \begin{align*}
    \int_E^{} F_{i,n}^2 F_{j,n}^2 \diff{\mu}
    &=
      \int_E^{} \pi_0 \left( F_{i,n}^2 \right) F_{j,n}^2 \diff{\mu}
     +
      \sum_{\substack{r \in \N \\ 0 < \lambda_r < 2\lambda_{k_i}}}
      \int_E^{} \pi_{\lambda_r} \left( F_{i,n}^2 \right) F_{j,n}^2 \diff{\mu}
      \\ &\qquad \qquad  \qquad \qquad \qquad+
      \int_E^{} \pi_{2\lambda_{k_i}} \left( F_{i,n}^2 \right) F_{j,n}^2
      \diff{\mu} .
  \end{align*}
Assume without loss of generality that $\lambda_{k_j} \leq \lambda_{k_i}$.
  The ergodicity assumption on $\LL$ forces $\pi_0(F_{i,n}^2)$ to be constant
  and thus 
\begin{equation*}
  \int_E^{} \pi_0(F_{i,n}^2) F_{j,n}^2 \diff{\mu}
  =
  \pi_0(F_{i,n}^2) \int_E^{} F_{j,n}^2 \diff{\mu}
  =
  \int_E^{} F_{i,n}^2 \diff{\mu} \int_E^{} F_{j,n}^2 \diff{\mu}.
\end{equation*}
By Cauchy-Schwarz and~\eqref{eq:4}, all integrals $\int_E^{}
\pi_{\lambda_r}(F_{i,n}^2) F_{j,n}^2 \diff{\mu}$ inside the sum in the middle
vanish in the limit. Finally, assumption~\eqref{eq:3} ensures that the third
term (which is zero if $\lambda_{k_i} \neq \lambda_{k_j}$), exhibits the wanted
asymptotic behaviour. 
\end{proof}

\begin{remark}{\rm
  As already mentioned, both of these additional assumptions are always verified
  in the case of the infinite-dimensional Ornstein-Uhlenbeck generator
  (see for example~\cite{nourdin_normal_2012} or~\cite{nualart_malliavin_2006}
  for any unexplained notation). While the ergodicity is immediate, more effort
  is needed to show~\eqref{eq:3}. For a 
  multiple integral $I_p(f)$ with $f \in \mathfrak{H}^{\odot p}$, the well known
  product formula yields that 
  $\pi_{2p}(I_p(f)^2)= I_{2p}(f \widetilde{\otimes} f)$. If $I_p(g)$, $g \in
  \mathfrak{H}^{\odot p}$ is another multiple integral, one can show 
  (see for example~\cite[Lemma 2.2(2)]{nourdin_asymptotic_2014}) that
  \begin{equation*}
    \mathbb{E}[I_{2p}(f \widetilde{\otimes} f) I_{2p} (g \widetilde{\otimes} g)]
  =
         2 \mathbb{E}[I_p(f) I_p(g)]^2 +
         \sum_{r=1}^{p-1} p!^2 \binom{p}{r}^2 \left\langle f \otimes_r f, g
         \otimes_r g \right\rangle_{\mathfrak{H}^{\otimes 2p}}.
  \end{equation*}
  Replacing the kernels $f$ and $g$ by two sequences $(f_n)$ and $(g_n)$, the
  classical Fourth Moment Theorem implies that scalar products inside
  the sum vanish in the limit if (at least one of) the two sequences
  $I_p(f_n)$ and $I_p(g_n)$ converges in distribution towards a Gaussian.
  }
\end{remark}


\end{document}